\newtheorem{theorem}{Theorem}	
\crefname{theorem}{Theorem}{Theorems}
\newtheorem{lemma}{Lemma}
\newtheorem{corollary}{Corollary}		
\newtheorem{proposition}{Proposition}	
\crefname{proposition}{Proposition}{Propositions}
\newtheorem{definition}{Definition}
\newtheorem{example}{Example}
\newtheorem{remark}{Remark}
\crefname{section}{Section}{Sections}
\crefname{theorem}{Theorem}{Theorems}
\crefname{lemma}{Lemma}{Lemmas}
\crefname{corollary}{Corollary}	{Corollaries}			
\crefname{proposition}{Proposition}{Propositions}	
\crefname{claim}{Claim}{Claims}
\crefname{conjecture}{Conjecture}{Conjectures}			
\crefname{definition}{Definition}{Definitions}
\crefname{problem}{Problem}{Problems}
\crefname{example}{Example}{Examples}
\crefname{remark}{Remark}{Remarks}
\crefname{figure}{Figure}{Figures}
\newcommand{\QED}{\hfill \ensuremath{\Box}}
\newcommand{\R}{\mathbb{R}}
\newcommand{\ld}{,\ldots ,}
\DeclareMathOperator{\rank}{rank}
\title[Characterization of generic transversality]
{
Characterization of generic transversality
}
\author{Shunsuke Ichiki
}
\address{
Institute of Mathematics for Industry, 
Kyushu University, 
Fukuoka 819-0395, 
Japan
}
\email{s-ichiki@imi.kyushu-u.ac.jp}
\subjclass[2010]{57R35, 57R45} 
\keywords{transversality, Sard's theorem} 
\thanks{The author was supported by JSPS KAKENHI Grant Numbers 
16J06911 and 19J00650.}
\begin{document}
\maketitle

\begin{abstract}
In this paper, the notion of generic transversality and its 
characterization are given. 
The characterization is also a further improvement of the 
basic transversality result and its strengthening 
which was given by John Mather. 
\end{abstract}


\section{Introduction}\label{sec:intro}
In this paper, unless otherwise stated, all manifolds are without boundary and 
assumed to have countable bases. 

Firstly, the definition of transversality is given. 
\begin{definition}\label{transverse}
{\rm
Let $X$ and $Y$ be $C^r$ manifolds, and $Z$ be a $C^r$ submanifold of $Y$ 
($r\geq1$). 
Let $f : X\to Y$ be a $C^1$ mapping. 
\begin{enumerate}
\item [(1)]
We say that $f:X\to Y$ is {\it transverse} to $Z$ 
{\it at $x$} if $f(x)\not\in Z$ or  
in the case of $f(x)\in Z$, the following holds: 
\begin{eqnarray*}
df_x(T_xX)+T_{f(x)}Z=T_{f(x)}Y.
\end{eqnarray*}
\item [(2)]
We say that $f:X\to Y$ is {\it transverse} to $Z$ 
if for any $x\in X$, 
the mapping $f$ is transverse to $Z$ at $x$. 
\end{enumerate}
}
\end{definition}
Let $X$, $A$ and $Y$ be $C^r$ manifolds ($r\geq1$). 
Let $U$ be an open set of $X\times A$.  
In the following, by $\pi_1 : U\to X$ and $\pi_2 : U\to A$, 
we denote the natural projections 
defined by 
\begin{eqnarray*}
\pi_1(x,a)&=&x, 
\\
\pi_2(x,a)&=&a. 
\end{eqnarray*} 
We say that a $C^1$ mapping $F:U\to Y$ is {\it generically transverse} to $Z$ if 
there exists a Lebesgue measure zero set $\Sigma$ of $\pi_2(U)$ such that 
for any $a\in \pi_2(U)-\Sigma$, the mapping $F_a:\pi_1(U\cap (X\times \{a\})) \to Y$ 
$(x\mapsto F(x,a))$ is transverse to $Z$. 
Here, note that $\pi_1(U\cap (X\times \{a\}))  $ is an open set of $X$. 
The main purpose of this paper 
is to 
give a characterization of generic transversality (for the main result, see \cref{main}). 

The following basic transversality result lies at the heart of most application of 
transversality.
\begin{proposition}[\cite{GG}]\label{abra}
Let $X$, $A$ and $Y$ be $C^\infty$ manifolds, $Z$ be a $C^\infty$ submanifold of $Y$ and $F:X\times A \to Y$ be a 
$C^\infty$ mapping. 
If $F$ is transverse to $Z$, 
then $F$ is generically transverse to $Z$. 
\end{proposition}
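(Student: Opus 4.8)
The plan is to exhibit the exceptional parameter set $\Sigma$ as the set of critical values of an auxiliary projection, so that the conclusion drops out of Sard's theorem. Set $W=F^{-1}(Z)$. Since $F$ is transverse to $Z$ and everything is $C^\infty$, the standard ``preimage of a submanifold under a transverse map'' fact shows that $W$ is a $C^\infty$ submanifold of $X\times A$ with $\codim_{X\times A}W=\codim_Y Z$, and that its tangent space at a point $(x,a)\in W$ is $T_{(x,a)}W=(dF_{(x,a)})^{-1}(T_{F(x,a)}Z)$. Consider the restriction $g:=\pi_2|_W:W\to A$, which is $C^\infty$. By Sard's theorem (applicable since $W$ has a countable base and $g$ is smooth), the set $\Sigma$ of critical values of $g$ has Lebesgue measure zero in $A=\pi_2(X\times A)$. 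The claim is that this $\Sigma$ works.

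The heart of the argument is a pointwise linear-algebra equivalence: for $(x,a)\in W$, writing $y=F(x,a)\in Z$ and identifying $T_{(x,a)}(X\times A)=T_xX\oplus T_aA$, the mapping $F_a$ is transverse to $Z$ at $x$ if and only if $(x,a)$ is a regular point of $g$, i.e.\ $dg_{(x,a)}\colon T_{(x,a)}W\to T_aA$ is surjective. Here $dg_{(x,a)}$ is the restriction to $T_{(x,a)}W=(dF_{(x,a)})^{-1}(T_yZ)$ of the projection $T_xX\oplus T_aA\to T_aA$, while $d(F_a)_x$ is the restriction of $dF_{(x,a)}$ to $T_xX\oplus 0$. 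For the ``if'' direction one takes an arbitrary $w\in T_yY$, uses transversality of $F$ to write $w=dF_{(x,a)}(u_1,u_2)+t$ with $t\in T_yZ$, and then uses surjectivity of $dg_{(x,a)}$ to correct $(u_1,u_2)$ by an element of $T_{(x,a)}W$ so as to kill its $T_aA$-component, landing back in $d(F_a)_x(T_xX)+T_yZ$. The ``only if'' direction is similar (and in fact does not use transversality of $F$): given $v_2\in T_aA$, transversality of $F_a$ lets one write $dF_{(x,a)}(0,v_2)=d(F_a)_x(u_1)+t$ with $t\in T_yZ$, so $(-u_1,v_2)\in T_{(x,a)}W$ projects to $v_2$.

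Granting the equivalence, the proposition follows at once. Fix $a\in A-\Sigma$; one must show that $F_a\colon X\to Y$ is transverse to $Z$ at every $x\in X$. If $F_a(x)\notin Z$ there is nothing to prove. If $F_a(x)=F(x,a)\in Z$, then $(x,a)\in W$, and since $a$ is a regular value of $g$ the point $(x,a)$ is a regular point of $g$; by the equivalence, $F_a$ is transverse to $Z$ at $x$. Hence $F_a$ is transverse to $Z$, so $F$ is generically transverse to $Z$.

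The main obstacle is essentially the bookkeeping in the two halves of the linear-algebra equivalence — keeping straight which tangent spaces the various differentials act on — together with the preliminary verification that $W$ is a submanifold with the asserted tangent spaces; the invocation of Sard's theorem itself is immediate. One should also dispatch the degenerate cases: if $X=\emptyset$ or $W=\emptyset$ the statement is vacuous, and if $\dim W<\dim A$ then every point of $W$ is critical, so $\Sigma\supseteq g(W)$, which is still of measure zero by Sard's theorem; none of these cause any difficulty.
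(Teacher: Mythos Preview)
Your proof is correct and follows essentially the same approach as the paper's (given there as the proof of \cref{lemma1r}, which the paper explicitly says uses the same argument as \cref{abra}): form $W=F^{-1}(Z)$, restrict the projection $\pi_2$ to it, and invoke Sard's theorem on the critical values. You spell out both directions of the pointwise equivalence ``$(x,a)$ is a regular point of $\pi_2|_W$ $\Leftrightarrow$ $F_a$ is transverse to $Z$ at $x$'', whereas the paper only asserts the implication it needs and handles the case $\dim W=0$ separately, but the strategy is identical.
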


In \cite{GP}, 
an improvement of \cref{abra} is given by John Mather (for the result, see \cref{mather}).  
In order to state the result, we define the following. 
\begin{definition}\label{delta}
{\rm 
Let $X$ and $Y$ be $C^r$ manifolds, and $Z$ be a $C^r$ submanifold of $Y$ 
($r\geq1$). 
Let $f : X\to Y$ be a $C^1$ mapping. 
For any $x\in X$, set  
\begin{eqnarray*}
\delta(f, x ,Z)=\left\{ \begin{array}{ll}
0 & \text{if $f(x)\not \in Z$}, \\
\dim Y-\dim (df_x(T_xX)+T_{f(x)}Z) & \text{if $f(x)\in Z$}. \\
\end{array} \right.
\end{eqnarray*}
We define 
\begin{eqnarray*}
\delta(f,Z)={\rm sup} \{ {\delta(f, x ,Z) \mid x\in X} \}. 
\end{eqnarray*}
}
\end{definition}
In the case that all manifolds and mappings are of class $C^\infty$, 
\cref{delta} is the definition of \cite[p. 230]{GP}. 

As in \cite{brucekirk}, 
$\delta(f,x,Z)$ measures the extent to which $f$ fails to be transverse to 
$Z$ at $x$. 
It is clearly seen that $\delta(f,Z)=0$ if and only if 
$f$ is transverse to $Z$. 
The following result by Mather is a natural strengthening of \cref{abra}. 
\begin{theorem}[\cite{GP}] \label{mather}
Let $X$, $A$ and $Y$ be $C^\infty$ manifolds, 
$Z$ be a $C^\infty$ submanifold of $Y$ 
and $F:X\times A \to Y$ be a $C^\infty$ mapping. 
If for any $(x,a)\in X\times A$, $\delta(F_a, x, Z)=0$ or 
$\delta(F, (x,a), Z)<\delta(F_a, x, Z)$, 
then 
$F$ 
is generically transverse to $Z$. 
\end{theorem}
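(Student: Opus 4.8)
The plan is to reduce the statement to a purely local one and then apply Sard's theorem --- the strategy behind \cref{abra}, where one applies Sard to the projection $X\times A\to A$ restricted to $F^{-1}(Z)$. The twist is that here $F$ is not assumed transverse to $Z$, so $F^{-1}(Z)$ need not be a manifold; instead I would apply Sard piecewise, on a countable decomposition of the ``bad set'' $B:=\{(x,a)\in X\times A\mid F_a\text{ is not transverse to }Z\text{ at }x\}$. Every $(x,a)\in B$ has $F(x,a)\in Z$, so $B\subseteq F^{-1}(Z)$, and by hypothesis each such point satisfies the strict drop $\delta(F,(x,a),Z)<\delta(F_a,x,Z)$; it suffices to produce a Lebesgue measure zero $\Sigma\subseteq A$ with $\pi_2(B)\subseteq\Sigma$. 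Since the manifolds have countable bases and a countable union of measure zero sets is measure zero, it is enough to work inside each member of a countable open cover of $F^{-1}(Z)$ by sets $W=(\tilde X\times\tilde A)\cap F^{-1}(V)$, with $\tilde X\subseteq X$, $\tilde A\subseteq A$ coordinate domains and $V\subseteq Y$ a chart on which $Z\cap V=g^{-1}(0)$ for a submersion $g\colon V\to\R^c$, $c=\codim Z$. On such a $W$ set $h=g\circ F\colon W\to\R^c$. A short computation (using that $g$ is a submersion and $T_wZ=\ker dg_w$) gives, for $(x,a)\in h^{-1}(0)$,
\[
\delta(F_a,x,Z)=c-\rank d(h_a)_x
\quad\text{and}\quad
\delta(F,(x,a),Z)=c-\rank dh_{(x,a)},
\]
so $B\cap W=\{(x,a)\in W\mid h(x,a)=0,\ \rank d(h_a)_x<c\}$, and on $B\cap W$ the hypothesis reads $\rank d(h_a)_x<\rank dh_{(x,a)}$ (the map $d(h_a)_x$ is the $x$-block of $dh_{(x,a)}$, so ``$\leq$'' is automatic).

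Next I would decompose $B\cap W$ by which minor of $Dh$ realizes the rank of $h$. Given a $k$-element subset $R\subseteq\{1\ld c\}$ and a $k$-element set $C$ of coordinate directions on $\tilde X\times\tilde A$, write $h_R=(h_j)_{j\in R}$ and let $O_{R,C}\subseteq W$ be the open set on which the $R\times C$ minor of $Dh$ is nonzero; there $h_R$ is a submersion, so $M_{R,C}:=h_R^{-1}(0)\cap O_{R,C}$ is a $C^\infty$ submanifold of $\tilde X\times\tilde A$ of codimension $k$. Put $B_{R,C}=\{(x,a)\in B\cap W\mid\text{the }R\times C\text{ minor of }Dh(x,a)\neq0,\ \rank dh_{(x,a)}=k\}$. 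Picking for each point of $B\cap W$ a nonsingular square submatrix of $Dh$ of size $\rank dh$, and noting there are only finitely many pairs $(R,C)$, one gets $B\cap W=\bigcup_{R,C}B_{R,C}$ with $B_{R,C}\subseteq M_{R,C}$. By Sard's theorem the set $\Sigma_{R,C}$ of critical values of $\pi_2|_{M_{R,C}}\colon M_{R,C}\to\tilde A$ has measure zero, and the key claim is $\pi_2(B_{R,C})\subseteq\Sigma_{R,C}$: if some $(x_0,a_0)\in B_{R,C}$ had $a_0$ a regular value, then $d(\pi_2|_{M_{R,C}})_{(x_0,a_0)}$ would be surjective, i.e.\ $T_{(x_0,a_0)}M_{R,C}+(T_{x_0}\tilde X\times\{0\})$ would be the full tangent space; applying $d(h_R)_{(x_0,a_0)}$ to this equality (its kernel is $T_{(x_0,a_0)}M_{R,C}$, since $h_R$ is a submersion there) would force the differential at $x_0$ of $x\mapsto h_R(x,a_0)$ to be onto $\R^k$, hence $\rank d(h_{a_0})_{x_0}\geq k=\rank dh_{(x_0,a_0)}$, contradicting the strict drop. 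Thus $\pi_2(B_{R,C})\subseteq\Sigma_{R,C}$, so $\pi_2(B\cap W)$ is measure zero; the union over the countably many charts $W$ yields the required $\Sigma$, i.e.\ generic transversality of $F$ to $Z$.

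The main obstacle is exactly that $F^{-1}(Z)$ need not be a manifold, so the argument for \cref{abra} cannot be run directly. The remedy --- covering the bad set by the finitely many locally closed pieces $B_{R,C}$, each lying inside an honest submanifold $M_{R,C}$ cut out by precisely $\rank dh$ of the components of the local equation of $Z$ --- is what restores the applicability of Sard's theorem, and it is the strict inequality $\delta(F,(x,a),Z)<\delta(F_a,x,Z)$ that forces every point of $B\cap W$ to be a critical point of the relevant projection. I expect the only genuinely delicate part to be keeping the rank bookkeeping straight and carrying out, via the chain rule, the surjectivity argument for the $x$-differential of $h_R$ at a hypothetical regular point.
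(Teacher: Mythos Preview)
Your proof is correct and follows essentially the same strategy as the paper: both locally enlarge $Z$ to a submanifold $\widetilde Z$ (in your language, passing from $h$ to $h_R$ drops $\rho=c-k$ of the defining equations) to which $F$ is transverse, then apply Sard to the projection from the resulting preimage manifold and use the strict inequality $\delta(F,(x,a),Z)<\delta(F_a,x,Z)$ to force the bad points into the critical values. The only difference is organizational---the paper isolates the construction of $\widetilde Z$ as \cref{vital1} with the estimate $\delta(F_a,x,Z)-\delta(F_a,x,\widetilde Z)\le\rho$ and then invokes \cref{lemma1r}, whereas you inline both steps and do the equivalent bookkeeping directly with ranks of minors.
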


\cref{mather} is a useful tool for investigating global properties of mappings. 
For example, the result is an essential tool for the proofs of Theorem~1 in \cite{GP} and Theorem~2.2 in \cite{brucekirk}. 
However, it is difficult to apply to mappings 
with elements $(x,a)\in X\times A$ satisfying 
$\delta(F, (x,a), Z)=\delta(F_a, x, Z)>0$, 
while our main result \cref{main} dose work in this case.  
\begin{definition}\label{def:W}
{\rm 
Let $X$, $A$ and $Y$ be $C^r$ manifolds, and $Z$ be a $C^r$ submanifold of $Y$ 
($r\geq1$). 
Let $F:U\to Y$ be a $C^1$ mapping, 
where $U$ is an open set of $X\times A$. 
Then, we define  
\begin{eqnarray*}
W(F,Z)=\{(x,a)\in U \mid \delta(F_a, x, Z)=\delta(F, (x,a), Z)>0 \}.  
\end{eqnarray*}  
}
\end{definition}
\begin{theorem}\label{main}
Let $X$, $A$ and $Y$ be $C^r$ manifolds, 
$Z$ be a $C^r$ submanifold of $Y$ and $F:U\to Y$ be a $C^r$ mapping, where $U$ is an open set of $X\times A$. If 
\[
r>\max \{ \dim X+\dim Z-\dim Y +\delta(F,Z),0\},
\] 
then the following $(\alpha)$ and $(\beta)$ are equivalent. 
\begin{enumerate}
\item[$(\alpha)$]
The set $\pi_2(W(F,Z))$ has Lebesgue measure zero in $\pi_2(U)$, 
where $\pi_2:U\to A$ is the natural projection defined by $\pi_2(x,a)=a$.   
\item[$(\beta)$]
The mapping $F$ is generically transverse to $Z$. 
\end{enumerate}
\end{theorem}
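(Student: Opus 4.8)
The plan is to reduce the asserted equivalence to a single unconditional null-set statement and then prove that statement by a fibrewise Morse--Sard argument. Write $B=\{(x,a)\in U\mid \delta(F_a,x,Z)>\delta(F,(x,a),Z)\}$. Since $d(F_a)_x(T_xX)\subseteq dF_{(x,a)}(T_{(x,a)}(X\times A))$, one always has $\delta(F,(x,a),Z)\le\delta(F_a,x,Z)$; hence the set $\{(x,a)\in U\mid \delta(F_a,x,Z)>0\}$ of points at which $F_a$ is not transverse to $Z$ is the \emph{disjoint} union of $W(F,Z)$ and $B$. Therefore $(\beta)$ is equivalent to the conjunction ``$\pi_2(W(F,Z))$ is null in $\pi_2(U)$ and $\pi_2(B)$ is null in $\pi_2(U)$'', of which $(\alpha)$ is exactly the first half; in particular $(\beta)\Rightarrow(\alpha)$ is immediate, and the whole theorem will follow once I show that, under the hypothesis on $r$, the set $\pi_2(B)$ always has Lebesgue measure zero. (When $W(F,Z)=\emptyset$ this is the $C^r$ analogue of \cref{mather}.)

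To prove that $\pi_2(B)$ is null, decompose $B$ into the finitely many subsets $B_j=\{(x,a)\in B\mid \delta(F,(x,a),Z)=j\}$; it suffices to show each $\pi_2(B_j)$ is null, and since this is local on $U$ and $U$ has a countable base, it suffices to fix $(x_0,a_0)\in B_j$ and show $\pi_2(B_j\cap V)$ is null for a suitable coordinate neighborhood $V\subseteq\R^{\dim X}\times\R^{\dim A}$ of $(x_0,a_0)$. Take also a chart of $Y$ near $F(x_0,a_0)$ in which $Z$ becomes $\R^{\dim Z}\times\{0\}$, and let $g\colon V\to\R^{c}$, $c=\codim Z$, be the last $c$ coordinates of $F$ in these charts; then $g^{-1}(0)=F^{-1}(Z)\cap V\supseteq B_j\cap V$, and for $(x,a)\in g^{-1}(0)$ one has $\delta(F,(x,a),Z)=c-\rank dg_{(x,a)}$ and $\delta(F_a,x,Z)=c-\rank d(g_a)_x$, where $g_a=g(\cdot,a)$. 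After a linear change of the target coordinates of $\R^{c}$ we may assume the image of $dg_{(x_0,a_0)}$ equals $\R^{c-j}\times\{0\}$; writing $g=(g',g'')$ with $g'\colon V\to\R^{c-j}$ and $g''\colon V\to\R^{j}$, the map $g'$ is a submersion near $(x_0,a_0)$, so after shrinking $V$ the set $N=(g')^{-1}(0)$ is a $C^r$ submanifold of $V$ of dimension $\dim X+\dim A-\dim Y+\dim Z+j$; set $\hat g=g''|_N$.

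Three elementary facts about differentials are now needed: (i) for $(x,a)\in N$, $\rank dg_{(x,a)}=(c-j)+\rank d\hat g_{(x,a)}$; (ii) $\pi_2|_N\colon N\to A$ is a submersion at $(x,a)\in N$ if and only if $\rank d(g'_a)_x=c-j$, where $g'_a=g'(\cdot,a)$; and (iii) if $d\hat g_{(x,a)}=0$ then $\ker d(g'_a)_x\subseteq\ker d(g''_a)_x$, whence $\rank d(g_a)_x=\rank d(g'_a)_x$. Granting these, let $(x,a)\in B_j\cap V$: then $g(x,a)=0$, so $(x,a)\in N$; since $\rank dg_{(x,a)}=c-j$, fact (i) forces $d\hat g_{(x,a)}=0$; then fact (iii) gives $\rank d(g'_a)_x=\rank d(g_a)_x=c-\delta(F_a,x,Z)<c-j$, using $\delta(F_a,x,Z)>j$; so by (ii) the point $(x,a)$ is a critical point of $\pi_2|_N$. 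Thus $\pi_2(B_j\cap V)$ lies in the set of critical values of $\pi_2|_N\colon N\to A$. Finally, since $(x_0,a_0)\in B$ gives $\delta(F,Z)\ge\delta(F,(x_0,a_0),Z)=j$, we get $\dim N-\dim A=(\dim X+\dim Z-\dim Y)+j\le(\dim X+\dim Z-\dim Y)+\delta(F,Z)<r$, so $r\ge\max\{1,\dim N-\dim A+1\}$, which is exactly the hypothesis of the Morse--Sard theorem for $\pi_2|_N$; hence those critical values are null in $A$. Covering $B_j$ by countably many such neighborhoods, and then taking the finite union over $j$, shows $\pi_2(B)$ is null, completing the proof.

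The step I expect to be the main obstacle is the coordinate bookkeeping just sketched: one must localize in the $\R^{c}$ target by peeling off exactly the $(c-j)$-dimensional image of $dg_{(x_0,a_0)}$, so that on the points of $B_j$ the leftover map $\hat g$ has vanishing differential along $N$, and then facts (i)--(iii) have to be checked carefully enough to confirm that the differentiability spent on each use of Morse--Sard is supplied by the hypothesis. It is precisely because $\delta(F,\cdot,Z)$ may attain the value $\delta(F,Z)$ on $B$ that the summand $\delta(F,Z)$ is needed in the bound on $r$.
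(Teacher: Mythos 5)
Your proof is correct and follows essentially the same route as the paper: both reduce the equivalence to showing that the projection of $\widetilde{W}=\{(x,a)\mid \delta(F_a,x,Z)>\delta(F,(x,a),Z)\}$ (your $B$) is always null, stratify by the value $\rho=\delta(F,(x,a),Z)$, and locally apply Sard's theorem to $\pi_2$ restricted to an auxiliary $C^r$ manifold of dimension $\dim X+\dim A-\dim Y+\dim Z+\rho$. Your $N=(g')^{-1}(0)$ is precisely $F^{-1}(\widetilde{Z})$ for the auxiliary submanifold $\widetilde{Z}\subset Y$ constructed in \cref{vital1}, and your facts (i)--(iii) repackage conditions (3)--(4) of that lemma together with \cref{lemma1r}.
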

Note this result works when the degree of differentiability is finite. 
%
From \cref{main}, in the case of $r=\infty$, we have the following. 
\begin{corollary}\label{main_coro}
Let $X$, $A$ and $Y$ be $C^\infty$ manifolds, 
$Z$ be a $C^\infty$ submanifold of $Y$ and $F:U\to Y$ be a $C^\infty$ mapping, where $U$ is an open set of $X\times A$. 
Then, the following $(\alpha)$ and $(\beta)$ are equivalent. 
\begin{enumerate}
\item[$(\alpha)$]
The set $\pi_2(W(F,Z))$ has Lebesgue measure zero in $\pi_2(U)$, 
where $\pi_2:U\to A$ is the natural projection defined by $\pi_2(x,a)=a$.   
\item[$(\beta)$]
The mapping $F$ is generically transverse to $Z$. 
\end{enumerate}
\end{corollary}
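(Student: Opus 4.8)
\noindent\textit{Proof plan.} The plan is simply to invoke \cref{main} with a finite order of differentiability. The only thing that needs checking is that the threshold $\max\{\dim X+\dim Z-\dim Y+\delta(F,Z),\,0\}$ occurring in \cref{main} is a finite nonnegative integer, so that one may choose an integer $r$ strictly exceeding it. This amounts to the finiteness of $\delta(F,Z)$: whenever $F(x,a)\in Z$ one has $dF_{(x,a)}(T_{(x,a)}(X\times A))+T_{F(x,a)}Z\supseteq T_{F(x,a)}Z$, so $\delta(F,(x,a),Z)\le\dim Y-\dim T_{F(x,a)}Z=\dim Y-\dim Z$; the same bound holds for $\delta(F_a,x,Z)$, hence $\delta(F,Z)\le\dim Y-\dim Z<\infty$, and the threshold is finite.

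Fix an integer $r$ with $r>\max\{\dim X+\dim Z-\dim Y+\delta(F,Z),\,0\}$. A $C^\infty$ manifold is in particular a $C^r$ manifold, a $C^\infty$ submanifold of $Y$ is a $C^r$ submanifold, and a $C^\infty$ mapping $F\colon U\to Y$ is a $C^r$ mapping (and $\delta(F,Z)$, depending only on first derivatives, is unchanged by this re‑interpretation). Thus $X,A,Y,Z,F$ and this $r$ satisfy all the hypotheses of \cref{main}, and the conclusion of \cref{main} — that $(\alpha)$ and $(\beta)$ are equivalent — is precisely the assertion of the corollary.

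I do not expect any genuine obstacle here: all of the work, in particular the interplay between the differentiability index and Sard's theorem, has already been carried out in the proof of \cref{main}, whose hypothesis $r>\dim X+\dim Z-\dim Y+\delta(F,Z)$ becomes vacuous once $r=\infty$. (Alternatively one could re-run the proof of \cref{main} directly in the $C^\infty$ setting, where Sard's theorem is available for every smooth map with no bookkeeping on the order of differentiability; but passing through \cref{main} is the economical route, and is the only step that really has to be written down.)
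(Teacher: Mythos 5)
Your proposal is correct and takes essentially the same route as the paper, which simply derives \cref{main_coro} from \cref{main} in the case $r=\infty$; the only detail you add is the (easy but worth noting) verification that $\delta(F,Z)\le\dim Y-\dim Z$ is finite, so the differentiability threshold in \cref{main} can indeed be exceeded by a finite $r$.
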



\begin{remark}\label{remark1}
{\rm  
\begin{enumerate}
\item [(1)]
If a given $C^\infty$ mapping $F:X\times A\to Y$ satisfies the assumption of  \cref{mather}, then it follows that $W(F,Z)=\emptyset$. 
Therefore, \cref{main} implies Mather's \cref{mather}. 
\item [(2)]
In \cref{main}, 
the hypothesis \[
r>\max \{ \dim X+\dim Z-\dim Y +\delta(F,Z),0\}
\] 
is used in the proof of $(\alpha)\Rightarrow(\beta)$. 
On the other hand, for the proof of 
$(\beta)\Rightarrow(\alpha)$, 
it is sufficient to assume that $r\geq1$. 

\item [(3)]
It is important to give the proof of \cref{main} for the following reason.  
The techniques for the proof of \cref{mather} in 
\cite[Lemma~2 (p. 230)]{GP} and \cite[Theorem~3.4~(p. 721)]{brucekirk} 
are significant  
for the proof of \cref{main}. 
However, in \cite[Lemma~2]{GP}, 
the proof of \cref{mather} is given only in the case that  
manifolds $X$ and $Z$ are compact, 
although the main results of \cite{GP} need the general case. 
The paper \cite{brucekirk} generalizes Mather's results on 
generic projections and also needs the transversality result. 
In \cite[Theorem~3.4]{brucekirk}, 
the general case is considered. However, there is an error in the assertion of Lemma~3.6 in \cite{brucekirk} (the set $\Sigma$ there need not be closed). 
In \cref{ex:3} of \cref{sec:W}, 
a counterexample of the assertion is given. 
Therefore, the current paper gives the first complete proof of \cref{mather}.  
\end{enumerate}
}
\end{remark}
In \cref{sec:W}, some examples of $W(F,Z)$ are given. 
In \cref{sec:lemma}, some assertions for the proof of \cref{main} 
are prepared. 
\cref{sec:main} is devoted to the proof of \cref{main}. 
\section{Some examples of $W(F,Z)$}\label{sec:W}
In this section, some examples of $W(F,Z)$ are given. 
\begin{example}
{\rm 
Let 
$F:\R\times \R\to \R$ be the mapping defined by $F(x,a)=0$, with $Z=\{0\}$. 
Then, for any 
$(x,a)\in \R\times \R$, it follows that $F(x,a)\in Z$ and  
$\delta(F, (x,a) ,Z)=\delta(F_a, x ,Z)=1$. 
Thus, we get 
\begin{eqnarray*}
W(F,Z)=\R\times \R. 
\end{eqnarray*}
From this example, we see that 
$W(F,Z)$ may not have Lebesgue measure zero. 
}
\end{example}

\begin{example}
{\rm 
Let 
$F:\R\times \R\to \R$ be the mapping defined by $F(x,a)=a^2x^2$, with $Z=\{0\}$. 
Then, $F(x,a)\in Z$ if and only if $ax=0$. 
We have 
\begin{eqnarray*}
JF_{(x,a)}&=&(2a^2x, 2ax^2), 
\\
(JF_a)_x&=&2a^2x. 
\end{eqnarray*}
Hence, we get  
\begin{eqnarray*}
\delta(F, (x,a) ,Z)
&=&
\left\{ 
\begin{array}{ll}
0 & \text{if $ax\not=0$}\\
1 & \text{if $ax=0$}, \\
\end{array} 
\right. 
\\
\delta(F_a, x ,Z)&=&
\left\{ 
\begin{array}{ll}
0 & ax\not=0\\
1 & ax=0. \\
\end{array} 
\right. 
\end{eqnarray*}
It follows that 
\begin{eqnarray*}
W(F,Z)=
\{(x,a)\in \R\times \R \mid ax=0 \}. 
\end{eqnarray*}
From this example, we see that 
$W(F,Z)$ may not be a manifold. 
}
\end{example}

\begin{example}\label{ex:3}
{\rm 
As in (3) of \cref{remark1}, 
the following is a counterexample to the assertion of Lemma 3.6 in \cite[p.~722]{brucekirk}. 

Let 
$F:\R\times \R\to \R^3$ be the mapping defined by $F(x,a)=(x,a,0)$. 
Let 
$Z=\{(x,0,0)\mid 0<x<1\}$ be a submanifold of $\R^3$. 
Then, 
$F(x,a)\in Z$ if and only if $(x,a)\in (0,1)\times \{0\}$. 
Here, $(0,1)$ is the open interval defined by $0<x<1$. 
We have  
\begin{eqnarray*}
\delta(F, (x,a) ,Z)
&=&
\left\{ 
\begin{array}{ll}
0 & \text{if $(x,a)\not \in (0,1)\times \{0\}$} \\
1 & \text{if $(x,a) \in (0,1)\times \{0\}$}, \\
\end{array} 
\right. 
\\
\delta(F_a, x ,Z)&=&
\left\{ 
\begin{array}{ll}
0 & \text{if $(x,a)\not \in (0,1)\times \{0\}$} \\
2 & \text{if $(x,a) \in (0,1)\times \{0\}$}. \\
\end{array} 
\right. 
\end{eqnarray*}
It follows that 
$W(F,Z)=\emptyset$. 
Set 
\begin{eqnarray*}
\Sigma=\{(x,a)\in \R\times \R  \mid \delta(F, (x,a), Z)=\delta(F,Z) \}. 
\end{eqnarray*}
Note that $F$ satisfies that $\delta(F_a, x ,Z)=0$ or 
$\delta(F, (x,a) ,Z)<\delta(F_a, x ,Z)$ for any $(x,a)\in \R\times \R$ 
and that $\delta(F,Z)$ is a positive integer. 
However, from $\Sigma=(0,1)\times \{0\}$, the set $\Sigma$ is not a closed set. 
}
\end{example}
\section{Assertions for the proof of \cref{main}}\label{sec:lemma}
In this section, some assertions for the proof of \cref{main} are prepared. 
\begin{lemma}\label{inequality}
Let $X$, $A$ and $Y$ be $C^r$ manifolds,  
$Z$ be a $C^r$ submanifold of $Y$ and 
$F:U\to Y$ be a $C^1$ mapping, 
where $U$ is an open set of $X\times A$ $(r\geq1)$. 
Then, it follows that 
\begin{eqnarray*}
\delta(F_a,x,Z)\geq \delta(F, (x,a), Z) 
\end{eqnarray*}
for any $(x,a)\in U$. 
\end{lemma}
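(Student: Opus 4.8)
The plan is to reduce everything to a pointwise linear-algebra inequality by working in local coordinates. Fix $(x,a)\in U$. If $F(x,a)\notin Z$, then $F_a(x)=F(x,a)\notin Z$ as well, so both sides of the claimed inequality are $0$ and there is nothing to prove. Hence I may assume $F(x,a)\in Z$, in which case $F_a(x)\in Z$ too, and
\[
\delta(F,(x,a),Z)=\dim Y-\dim\bigl(dF_{(x,a)}(T_{(x,a)}(X\times A))+T_{F(x,a)}Z\bigr),
\]
\[
\delta(F_a,x,Z)=\dim Y-\dim\bigl(d(F_a)_x(T_xX)+T_{F_a(x)}Z\bigr).
\]
Since $F_a(x)=F(x,a)$, the two subspaces $T_{F(x,a)}Z$ coincide, so it suffices to show
\[
d(F_a)_x(T_xX)\subseteq dF_{(x,a)}(T_{(x,a)}(X\times A)),
\]
because then the containing subspace on the right has dimension at least as large after adding $T_{F(x,a)}Z$, and the codimensions are reversed.

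The key step is precisely this containment, which is just the chain rule: if $\iota_a:X\to X\times A$ denotes the inclusion $x\mapsto(x,a)$ (defined on the open set $\pi_1(U\cap(X\times\{a\}))$, which is open in $X$ by the remark following the definition of generic transversality), then $F_a=F\circ\iota_a$, so $d(F_a)_x=dF_{(x,a)}\circ d(\iota_a)_x$. In particular, every vector in the image of $d(F_a)_x$ lies in the image of $dF_{(x,a)}$, which is the desired inclusion. Combined with $T_{F_a(x)}Z=T_{F(x,a)}Z$, this gives
\[
d(F_a)_x(T_xX)+T_{F(x,a)}Z\ \subseteq\ dF_{(x,a)}\bigl(T_{(x,a)}(X\times A)\bigr)+T_{F(x,a)}Z,
\]
hence $\dim\bigl(d(F_a)_x(T_xX)+T_{F(x,a)}Z\bigr)\le\dim\bigl(dF_{(x,a)}(T_{(x,a)}(X\times A))+T_{F(x,a)}Z\bigr)$, and subtracting both sides from $\dim Y$ yields $\delta(F_a,x,Z)\ge\delta(F,(x,a),Z)$.

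I do not expect any serious obstacle here; the statement is essentially the observation that restricting the domain of a map can only make it ``less transverse.'' The only minor care needed is bookkeeping about where $F_a$ is defined and noting that the relevant tangent space of $Z$ at $F_a(x)$ and at $F(x,a)$ are literally the same space, so that the codimension comparison is legitimate. One could alternatively phrase the whole argument with Jacobian matrices in charts: the Jacobian of $F_a$ at $x$ is the block of the Jacobian of $F$ at $(x,a)$ consisting of the columns of partial derivatives in the $X$-directions, so its column space is contained in that of $JF_{(x,a)}$; the conclusion is the same. I would present the coordinate-free version for brevity.
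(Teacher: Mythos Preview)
Your argument is correct and follows essentially the same route as the paper: both proofs reduce to the inclusion $d(F_a)_x(T_xX')\subset dF_{(x,a)}(T_{(x,a)}U)$ and conclude by comparing the dimensions of the resulting sums with $T_{F(x,a)}Z$. Your version is in fact slightly more careful, since you explicitly dispose of the case $F(x,a)\notin Z$ and justify the key inclusion via the chain rule $F_a=F\circ\iota_a$, whereas the paper states the inclusion as ``not hard to see'' and tacitly assumes $F(x,a)\in Z$ in its displayed computation.
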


\begin{proof}
Let $(x,a)\in U$ be any point. 
For simplicity, set $X'=\pi_1( U\cap(X\times \{a\}))$. 
It is not hard to see that 
\begin{eqnarray*}
(dF_{a})_{x}(T_{x}X') \subset 
dF_{(x,a)}(T_{(x,a)}U). 
\end{eqnarray*}
Hence,  we have 
\begin{eqnarray*}
&{}&\delta(F_{a},x,Z)- \delta(F, (x,a), Z) 
\\
&=&\left(\dim Y-\dim ((dF_{a})_{x}(T_{x}X')+T_{F_{a}(x)}Z)\right)
\\
&&-
\left(\dim Y-\dim (dF_{(x,a)}(T_{(x,a)}U)+T_{F(x,a)}Z)\right)
\\
&\geq &0. 
\end{eqnarray*}
\end{proof}
In the following,  
for two sets $V_1$, $V_2$, a mapping $f:V_1\to V_2$, and 
a subset $V_3$ of $V_1$, the restriction of the mapping $f$ to $V_3$ 
is denoted by $f|_{V_3}:V_3\to V_2$. 

Let $X$ and $Y$ be $C^r$ manifolds, and let $f : X\to Y$ be a $C^1$ mapping 
($r\geq1$). A point $x\in X$ is called a {\it critical point} of $f$ if
it is not a regular point, i.e., the rank of $df_x$ is less than the dimension of
$Y$. We say that a point $y\in Y$ is a {\it critical value} if it is the image of a critical point. 
A point $y\in Y$ is called a {\it  regular value} if it is not a critical value. 
The following is Sard's theorem. 
\begin{theorem}[\cite{sard}]\label{sard}
If $X$ and $Y$ are $C^r$ manifolds, 
$f : X \to Y$ is a $C^r$ mapping, and $r> \max\{\dim X-\dim Y,0\}$, then the set of
critical values of $f$ has Lebesgue measure zero.
\end{theorem}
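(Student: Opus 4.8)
The plan is to reduce the statement to a local assertion in Euclidean space and then run the classical Sard--Pontryagin induction on dimension. First I would recall that ``Lebesgue measure zero'' is a coordinate-free notion on a $C^r$ manifold ($r\ge1$): a subset is null if its image in every chart is Lebesgue null, and this is well defined because $C^1$ diffeomorphisms of open subsets of $\R^n$ carry null sets to null sets (a $C^1$ map is locally Lipschitz, and Lipschitz maps preserve measure zero). Since $X$ and $Y$ have countable bases, $X$ is covered by countably many coordinate charts and a countable union of null sets is null; hence it suffices to treat a $C^r$ map $f\colon V\to\R^m$ on an open set $V\subseteq\R^n$, with $n=\dim X$, $m=\dim Y$, and to show that $f(C)$ is null in $\R^m$, where $C=\{x\in V\mid \rank df_x<m\}$ is the critical set. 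If $n<m$ every point is critical and $f(V)$ is null because a $C^1$ (hence locally Lipschitz) image of a lower-dimensional set has measure zero; the degenerate case $m=0$ gives $C=\emptyset$. So the substantial case is $n\ge m$, where the hypothesis reads $r\ge n-m+1\ge1$.

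Second, I would set up the filtration driving the induction on $n$. For $i\ge1$ let $C_i\subseteq C$ be the set of points at which all partial derivatives of $f$ of orders $1\ld i$ vanish, so that
\[
C\supseteq C_1\supseteq C_2\supseteq\cdots,
\]
and fix an integer $k=\lceil n/m\rceil$, which satisfies $k\le n-m+1\le r$, so that $f$ is of class $C^k$. Then $f(C)$ is the union of $f(C\setminus C_1)$, the sets $f(C_i\setminus C_{i+1})$ for $1\le i<k$, and $f(C_k)$, and it is enough to prove each is null. The base case $n=0$ is trivial, and I assume the (reduced Euclidean) theorem for source dimension $n-1$.

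Third come the three estimates.
\begin{enumerate}
\item[(a)] For $f(C\setminus C_1)$: near a point where some first partial, say $\partial f_1/\partial x_1$, is nonzero, the map $h(x)=(f_1(x),x_2\ld x_n)$ is a local $C^r$ diffeomorphism and $f\circ h^{-1}$ preserves the first coordinate, exhibiting $f$ as a family of maps $\R^{n-1}\to\R^{m-1}$ parametrised by that coordinate. The inductive hypothesis applied slicewise, followed by Fubini's theorem, shows the image of the critical points is null; countably many such neighbourhoods cover $C\setminus C_1$.
\item[(b)] For $f(C_i\setminus C_{i+1})$ with $i\ge1$: at such a point some $i$-th order partial $w$ of a component of $f$ has nonvanishing gradient, so locally $M=\{w=0\}$ is a $C^{r-i}$ hypersurface containing $C_i$. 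Every point of $C_i$ is critical for $f|_M\colon M\to\R^m$ since all first partials of $f$ vanish there, and the inductive hypothesis applied to $f|_M$ (source dimension $n-1$) makes $f(C_i\cap M)$ null; a countable cover finishes the part.
\item[(c)] For $f(C_k)$: subdivide a cube of side $\delta$ in $V$ into $l^n$ subcubes of side $\delta/l$. If a subcube meets $C_k$ at $x$, Taylor's theorem and the vanishing of all partials of orders $\le k$ at $x$ give, for $y$ in that subcube, $|f(y)-f(x)|\le c\,\omega(\delta/l)(\delta/l)^{k}$ with $\omega(t)\to0$; hence $f$ of the subcube lies in an $m$-cube of volume $O\bigl(\omega(\delta/l)^m l^{-mk}\bigr)$. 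Summing over the at most $l^n$ subcubes gives total measure $O\bigl(\omega(\delta/l)^m\,l^{\,n-mk}\bigr)\to0$ as $l\to\infty$, since $n-mk\le0$. Thus $f(C_k)$ is null.
\end{enumerate}

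The main obstacle I anticipate is not the qualitative structure, which is standard, but the quantitative bookkeeping of differentiability needed to match the sharp hypothesis $r>\max\{n-m,0\}$. In step~(b) the hypersurface $M=\{w=0\}$ is only of class $C^{r-i}$, so restricting $f$ erodes smoothness as $i$ grows, and one must organise the induction so that every invocation of the inductive hypothesis on a source of dimension $n-1$ still has order at least $(n-1)-m+1$ at its disposal; a naive accounting loses too much and only proves the result for $C^\infty$ (or for $r$ comfortably above $n/m$). Relatedly, in step~(c) the crude bound $|y-x|^{k+1}$ would demand $C^{k+1}$, so using the refined remainder with modulus of continuity $\omega$ (valid for $C^k$) is exactly what allows $k=\lceil n/m\rceil\le r$ to suffice. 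Making every unit of smoothness line up with the sharp bound is the delicate heart of the argument, and closing it is where a careful refinement of the elementary proof (rather than the $C^\infty$ version) is genuinely required.
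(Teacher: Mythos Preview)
The paper does not prove this statement: Theorem~\ref{sard} is quoted from \cite{sard} as a black box and used immediately afterwards in the proof of Proposition~\ref{lemma1r}, with no argument supplied. So there is no ``paper's own proof'' to compare your proposal against.

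Your outline is the classical Sard--Federer argument (reduction to charts, the filtration $C\supseteq C_1\supseteq\cdots$, local straightening on $C\setminus C_1$, hypersurface restriction on $C_i\setminus C_{i+1}$, and the Taylor cube-counting on $C_k$), and it is essentially correct as a plan. You have also correctly identified the only genuinely delicate point: in step~(b) the hypersurface $M=\{w=0\}$ is only $C^{r-i}$, and a naive appeal to the inductive hypothesis on $f|_M$ does not retain enough differentiability to close the sharp bound $r>n-m$ (the case $m=1$, $n\ge3$ already breaks the naive count). The standard fix is to run the induction not on the full Sard statement but on a stronger joint statement about the images of the $C_i$ strata, or equivalently to observe that on $M$ the relevant points lie in a deeper stratum of $f|_M$ so that less smoothness is actually required of the restricted map; this is the refinement you allude to in your final paragraph, and it is indeed what must be made precise to obtain the sharp $C^r$ version rather than only the $C^\infty$ one.
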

The following result can be proved by the same argument 
as in the proof of \cref{abra}. 
For the sake of readers' convenience, the proof is given. 
\begin{proposition}[\cite{GG}]\label{lemma1r}
Let $X$, $A$ and $Y$ be $C^r$ manifolds, 
$Z$ be a $C^r$ submanifold of $Y$ and $F:U\to Y$ be a $C^r$ mapping, where $U$ is an open set of $X\times A$. If 
\[
r>\max \{ \dim X+\dim Z-\dim Y,0\},
\]
and $F$ is transverse to $Z$, 
then $F$ is generically transverse to $Z$. 
\end{proposition}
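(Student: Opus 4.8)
The plan is to mimic the classical parametric transversality argument (the proof of \cref{abra} in \cite{GG}), but to take care everywhere that the degree of differentiability is only finite, so that each application of Sard's theorem satisfies the dimension hypothesis of \cref{sard}. First I would reduce to a local statement: since $Z$ is a $C^r$ submanifold of $Y$, every point of $Z$ has a chart in which $Z$ is cut out by the vanishing of the last $\codim Z = \dim Y - \dim Z$ coordinates; write $\phi$ for the corresponding $C^r$ submersion onto $\R^{\codim Z}$ defined near such a point, so that $F$ is transverse to $Z$ near a point of $F^{-1}(Z)$ precisely when $0$ is a regular value of $\phi\circ F$ on that neighborhood of $U$. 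Because $Y$ has a countable basis, $Z$ can be covered by countably many such charts, and a countable union of Lebesgue null sets is null, so it suffices to produce, for each chart, a null set $\Sigma$ in $\pi_2(U)$ off which $F_a$ meets $Z$ transversally within that chart.

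Next I would introduce the map $g = \phi\circ F : U' \to \R^{\codim Z}$, where $U'\subset U$ is the open set mapped by $F$ into the chart domain. The transversality of $F$ to $Z$ says exactly that $0$ is a regular value of $g$, hence $W := g^{-1}(0)$ is a $C^r$ submanifold of $U'$ of dimension $\dim X + \dim A - \codim Z = \dim X + \dim A + \dim Z - \dim Y$. Now consider the restriction of the projection $\pi_2$ to $W$, i.e.\ the $C^r$ map $\pi_2|_W : W \to A$. The standard computation shows that a value $a\in A$ is a regular value of $\pi_2|_W$ if and only if $F_a$ is transverse to $Z$ at every point $x$ with $(x,a)\in W'$ (for the appropriate open piece), the point being that the kernel directions of $d(\pi_2|_W)$ are exactly the $X$-directions tangent to $W$, and surjectivity of $dg$ restricted to those directions is transversality of $F_a$. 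Therefore, by \cref{sard} applied to $\pi_2|_W$, the set of critical values has Lebesgue measure zero in $A$ provided
\[
r > \max\{\dim W - \dim A,\ 0\} = \max\{\dim X + \dim Z - \dim Y,\ 0\},
\]
which is precisely the hypothesis we are given. Off this null set, $F_a$ is transverse to $Z$ within the chart.

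Two bookkeeping points complete the argument. First, the open set $U$ need not be a product; one restricts attention to $\pi_1(U\cap(X\times\{a\}))$, which (as noted after the definition of generic transversality) is open in $X$, and the computation relating regular values of $\pi_2|_W$ to transversality of $F_a$ goes through verbatim on this open subset. Second, $W$ may fail to be second countable a priori only if $X\times A$ fails to be, but all our manifolds have countable bases, so $W$ does too and Sard's theorem applies as stated. I expect the main obstacle to be purely organizational rather than conceptual: one must make sure the finitely many different chart neighborhoods, together with the countable cover of $Z$, assemble into a single Lebesgue null exceptional set in $\pi_2(U)$, and that in each piece the dimension count invoked for \cref{sard} really is $\dim X+\dim Z-\dim Y$ and not something larger — this is where the hypothesis on $r$ is used and must be tracked carefully, since a looser count would not be available when $r$ is finite. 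Everything else is the routine local linear algebra identifying the differential of $\pi_2|_W$ with the transversality condition for $F_a$.
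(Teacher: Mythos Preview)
Your proposal is correct and follows essentially the same route as the paper: form $F^{-1}(Z)$ as a $C^r$ submanifold of codimension $\dim Y-\dim Z$, apply \cref{sard} to the restriction of $\pi_2$ to it, and observe that the hypothesis on $r$ gives exactly the required bound $r>\max\{\dim F^{-1}(Z)-\dim A,0\}$. The only differences are organizational: the paper works directly with the global submanifold $F^{-1}(Z)$ rather than first localizing via submersion charts for $Z$, and it separates off the trivial case $\dim F^{-1}(Z)=0$ by hand.
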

\begin{proof}
Since $F$ is transverse to $Z$, the set $F^{-1}(Z)$ is 
a $C^r$ submanifold of $U$ satisfying 
\begin{eqnarray}\label{eq:lemma1r}
\dim X+\dim A- \dim F^{-1}(Z)=\dim Y-\dim Z. 
\end{eqnarray}

Firstly, suppose that $\dim F^{-1}(Z)=0$. Then, 
since $F^{-1}(Z)$ is a countable set, 
$\pi_2(F^{-1}(Z))$ has Lebesgue measure zero in $\pi_2(U)$. 
It is clearly seen that for any $a\in\pi_2(U)-\pi_2(F^{-1}(Z))$, 
the mapping $F_a$ is transverse to $Z$. 

Finally, we will consider the case $\dim F^{-1}(Z)>0$. 
It is not hard to see that if $a\in \pi_2(U)$ is 
a regular value of $\pi_2|_{F^{-1}(Z)}$, then 
$F_a$ is transverse to $Z$. Here, $\pi_2:U\to A$ is the natural projection 
defined by $\pi_2(x,a)=a$ as in \cref{sec:intro}. 
Let $\Sigma$ be the set of critical values of $\pi_2|_{F^{-1}(Z)}$. 
From $r>\max \{ \dim X+\dim Z-\dim Y,0\}$ and 
(\ref{eq:lemma1r}), we have $r>\max \{ \dim F^{-1}(Z)-\dim A,0\}$. 
From \cref{sard}, $\Sigma$ has Lebesgue measure zero in $A$. 
Since $\pi_2(U)$ is an open set of $A$, the set $\Sigma\cap \pi_2(U)$ 
has Lebesgue measure zero in $\pi_2(U)$. 
Therefore, if $a\in \pi_2(U)-(\Sigma\cap \pi_2(U))$, then 
$F_a$ is transverse to $Z$. 
\end{proof}
The following lemma is a crucial result and the proof will be separated 
into three cases. 
\begin{lemma}\label{vital1}
Let $X$, $A$ and $Y$ be $C^r$ manifolds, 
$Z$ be a $C^r$ submanifold of $Y$ and 
$F:U\to Y$ be a $C^1$ mapping, 
where $U$ is an open set of $X\times A$ $(r\geq1)$. 
For any integer $\rho$ satisfying $0\leq \rho \leq \delta(F,Z)$, set 
\begin{eqnarray*}
\widetilde{W}_\rho=\{ (x,a)\in U \mid 
\delta(F_a,x,Z)>\delta(F, (x,a), Z)=\rho \}. 
\end{eqnarray*}
Then, for any $(x_0, a_0)\in \widetilde{W}_\rho$, 
there exist an open neighborhood $\widetilde{U}$ of $(x_0, a_0)$ 
and a $C^r$ submanifold $\widetilde{Z}$ of $Y$ 
satisfying the following:
\begin{enumerate}
\item [(1)]
$\dim \widetilde{Z}=\dim Z+\rho$. 
\item [(2)]
$F(\widetilde{U})\cap Z\subset \widetilde{Z}$. 
\item [(3)]
The mapping $F|_{\widetilde{U}}:\widetilde{U} \to Y$ is transverse to $\widetilde{Z}$. 
\item [(4)]
For any $(x,a)\in \widetilde{U}$, it follows that 
$\delta(F_a, x, Z)-\delta(F_a, x, \widetilde{Z})\leq \rho$. 
\end{enumerate}
\end{lemma}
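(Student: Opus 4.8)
The plan is to work in local coordinates around the point $F(x_0,a_0)$ and to build $\widetilde Z$ as an explicit $(\dim Z+\rho)$-dimensional submanifold that "absorbs" the defect of $F_{a_0}$ relative to $Z$ at $x_0$, while remaining transverse to $F$ on a small neighborhood. First I would choose a $C^r$ chart of $Y$ adapted to $Z$, so that near $F(x_0,a_0)$ the submanifold $Z$ looks like $\mathbb R^{\dim Z}\times\{0\}\subset\mathbb R^{\dim Y}$; composing $F$ with this chart I may assume $Y=\mathbb R^n$, $Z=\mathbb R^{p}\times\{0\}$ with $n=\dim Y$, $p=\dim Z$, and $F(x_0,a_0)=0$. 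Write $\pi_Z:\mathbb R^n\to\mathbb R^{n-p}$ for the projection killing the $Z$-directions; the condition $F(x,a)\in Z$ is $\pi_Z\circ F(x,a)=0$, and the defects are $\delta(F_{a},x,Z)=n-p-\operatorname{rank} d(\pi_Z\circ F_{a})_x$ and $\delta(F,(x,a),Z)=n-p-\operatorname{rank} d(\pi_Z\circ F)_{(x,a)}$ on $F^{-1}(Z)$.

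Next I would do the linear algebra at $(x_0,a_0)$. By definition of $\widetilde W_\rho$ we have $\operatorname{rank} d(\pi_Z\circ F)_{(x_0,a_0)}=n-p-\rho$, i.e.\ the image $dF_{(x_0,a_0)}(T_{(x_0,a_0)}U)+T_0 Z$ is a subspace $V\subset\mathbb R^n$ of dimension $n-\rho$, strictly containing $T_0Z=\mathbb R^p\times\{0\}$. Take $\widetilde Z$ to be (the preimage under the chart of) the affine subspace $V$ itself, a linear subspace of dimension $p+\rho$ through $F(x_0,a_0)$; this immediately gives (1), and (2) can be arranged because on a sufficiently small $\widetilde U$ every point of $F(\widetilde U)\cap Z$ lies in $Z\subset V=\widetilde Z$ (here I use $T_0Z\subset V$, so a neighborhood of $0$ in $Z$ sits inside the flat $\widetilde Z$ after possibly shrinking). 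For (3): $F|_{\widetilde U}$ is transverse to $\widetilde Z$ at $(x_0,a_0)$ precisely because $dF_{(x_0,a_0)}(T U)+V=V+V=V=T\widetilde Z$; then transversality to a fixed submanifold is an open condition on the source, so it persists on a small enough $\widetilde U$. Actually a cleaner route to a genuine $C^r$ submanifold (rather than just its tangent at one point) is: choose a linear surjection $\ell:\mathbb R^{n-p}\to\mathbb R^\rho$ whose kernel is the image of $d(\pi_Z\circ F)_{(x_0,a_0)}$, set $g=\ell\circ\pi_Z:\mathbb R^n\to\mathbb R^\rho$, and let $\widetilde Z=g^{-1}(0)$; this is a linear subspace of codimension $\rho$, hence dimension $n-\rho$ — wait, that is the wrong dimension, so instead one wants $\widetilde Z$ to contain $Z$ and have dimension $p+\rho$, which is codimension $n-p-\rho$; so take $\widetilde Z = g^{-1}(0)$ where now $g:\mathbb R^n\to\mathbb R^{n-p-\rho}$ is a surjective linear map with $g|_{\mathbb R^p\times\{0\}}=0$ and $\ker g \supset \operatorname{Image} dF_{(x_0,a_0)}$, which is possible exactly because that image plus $T_0Z$ has dimension $n-\rho$. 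Then $g\circ F$ has surjective differential at $(x_0,a_0)$ — no, $g$ kills the image — rather $g\circ F$ has differential zero at $(x_0,a_0)$, which is the statement $F(x_0,a_0)\in\widetilde Z$ together with $dF(TU)\subset T\widetilde Z$, i.e.\ transversality holds trivially there; and again this is open, giving (3) on a small $\widetilde U$.

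For (4), which I expect to be the main obstacle, the point is to compare $\delta(F_a,x,Z)$ and $\delta(F_a,x,\widetilde Z)$ at every $(x,a)\in\widetilde U$, not just at $(x_0,a_0)$. Since $\widetilde Z\supset Z$ (locally) with $T_{F(x)}\widetilde Z\supset T_{F(x)}Z$ and $\operatorname{codim}\widetilde Z=\operatorname{codim}Z-\rho$, for any point with $F_a(x)\in Z$ we get from the inclusion $d(F_a)_x(T_xX')+T_{F_a(x)}Z\subset d(F_a)_x(T_xX')+T_{F_a(x)}\widetilde Z$ that
\[
\delta(F_a,x,Z)-\delta(F_a,x,\widetilde Z)=\dim\big(d(F_a)_x(T_xX')+T\widetilde Z\big)-\dim\big(d(F_a)_x(T_xX')+TZ\big)\leq \dim T\widetilde Z-\dim TZ=\rho,
\]
using that adding a vector space $W$ can only increase $\dim(S+W)$ by at most $\dim W-\dim(S\cap W)$ — here with $S=d(F_a)_x(T_xX')+TZ$, and $\dim T\widetilde Z-\dim TZ=\rho$ bounds the increase since $TZ\subset S\cap T\widetilde Z$. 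When $F_a(x)\notin Z$ one has $\delta(F_a,x,Z)=0$ and the inequality is trivial (note $\delta\ge 0$ always). So (4) follows purely formally from $Z\subset\widetilde Z$ and the dimension count — the only care needed is that "$\widetilde Z\supset Z$" holds on all of $F(\widetilde U)$, which is why $\widetilde U$ must be shrunk so that $F(\widetilde U)\cap Z$ lies in the chart domain where $Z$ is the flat piece of $\widetilde Z$. I would therefore order the steps as: (i) set up adapted coordinates; (ii) define $g$ and $\widetilde Z=g^{-1}(0)$ and verify (1); (iii) shrink $\widetilde U$ to get (2) and (3); (iv) deduce (4) by the linear-algebra inequality above. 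The genuine content is steps (ii)–(iii); step (iv) is bookkeeping once the inclusions are in place.
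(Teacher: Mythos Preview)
Your argument for (4) is clean and matches the paper's, but the construction of $\widetilde Z$ in steps (ii)--(iii) is backwards, and this is a genuine gap. In both attempts you end up with $dF_{(x_0,a_0)}(T_{(x_0,a_0)}U)\subset T_{F(x_0,a_0)}\widetilde Z$ and then declare ``transversality holds trivially.'' But transversality of $F$ to $\widetilde Z$ at $(x_0,a_0)$ means $dF_{(x_0,a_0)}(T U)+T_{F(x_0,a_0)}\widetilde Z=T_{F(x_0,a_0)}Y$; if the image of $dF$ is already contained in $T\widetilde Z$ then the left side is just $T\widetilde Z$, a proper subspace of $T Y$ (since $\dim\widetilde Z=p+\rho<n$), so transversality \emph{fails}. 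The dimension count also breaks: you ask for a surjection $g:\mathbb R^n\to\mathbb R^{n-p-\rho}$ whose kernel (of dimension $p+\rho$) contains both $T_0Z$ and $\operatorname{Image}dF_{(x_0,a_0)}$; but the span of those two subspaces has dimension $n-\rho$, and $p+\rho\geq n-\rho$ need not hold.

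The correct choice goes the other way. Since $dF_{(x_0,a_0)}(TU)+T_0Z$ has dimension $n-\rho$, pick $\rho$ directions in $T_0Y$ \emph{complementary} to this subspace and let $\widetilde Z$ be $Z$ extended by those $\rho$ directions (concretely: after reordering the normal coordinates $y_{q+1},\dots,y_\ell$ so that the first $\ell-q-\rho$ rows of $d(\pi_Z\circ F)_{(x_0,a_0)}$ are independent, set $\widetilde Z=\{y_{q+1}=\cdots=y_{\ell-\rho}=0\}$). Then $T_0\widetilde Z$ has dimension $p+\rho$, contains $T_0Z$, and together with $dF(TU)$ spans $T_0Y$; transversality at $(x_0,a_0)$ now genuinely holds, and the rank condition giving it is open, yielding (3) on a small $\widetilde U$. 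With this corrected $\widetilde Z$ your proof of (4) goes through verbatim, and the overall structure (adapted chart, linear $\widetilde Z$, openness of the rank condition) is exactly the paper's approach.
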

\begin{proof}
In this proof, for a positive integer $k$, 
we denote the $k\times k$ unit matrix by $E_k$. 
Set $n=\dim X$, $m=\dim A$, $\ell=\dim Y$ and $q=\dim Z$. 

Let $(x_0,a_0)\in \widetilde{W}_\rho$ be any point. 
Then, we get $\rho<\ell$. 
Indeed, if $\rho\geq \ell$, then we have $\delta(F_{a_0},{x_0},Z)>\ell$. 
This contradicts $\delta(F_{a_0},{x_0},Z)\leq \ell$. 

From $\delta(F_{a_0},{x_0},Z)>0$, 
it is clearly seen that 
$F_{a_0}(x_0) (=F(x_0,a_0))\in Z$ and $q<\ell$.  

Let $(U',(x_1\ld x_n,a_1\ld a_m))$ (resp., $(V,(y_1\ld y_\ell))$) be a coordinate 
neighborhood containing $(x_0,a_0)\in U$ (resp., $F(x_0,a_0)\in Y$) 
such that 
\begin{eqnarray*}
Z\cap V&=&\{(y_1\ld y_q,y_{q+1}\ld y_\ell)\in V \mid y_{q+1}=\cdots =y_\ell=0\}, 
\\
F(U')&\subset &V, 
\\
F(x_0,a_0)&=&(0\ld0) \in \R^\ell.  
\end{eqnarray*} 

We consider the two cases:  
\begin{eqnarray*}
\left\{ \begin{array}{ll}
\text{1. The case $q=0$}. \\
\text{2. The case $q>0$}. \\
\end{array} \right.
\end{eqnarray*}

\underline{1. The case $q=0$.}
\vspace{1mm}

From $F(x_0,a_0)\in Z$ and $\dim T_{F(x_0,a_0)}Z=0$, we have 
\begin{eqnarray*}
\delta(F, (x_0,a_0), Z)
&=&\ell-\rank dF_{(x_0,a_0)}. 
\end{eqnarray*}
From $\delta(F, (x_0,a_0), Z)=\rho$, we get $\rank dF_{(x_0,a_0)}=\ell -\rho$. 
Set $F|_{U'}=(F_1\ld F_{\ell})$. 
For any $(x,a)\in U'$, set 
\begin{eqnarray*}
&{}&M_1(x,a) \\
&=&
\left(
\begin{array}{cccccc}
\frac{\partial F_{1}}{\partial x_1}(x,a) & \cdots & \frac{\partial F_{1}}{\partial x_n}(x,a) &\frac{\partial F_{1}}{\partial a_1}(x,a) & \cdots & \frac{\partial F_{1}}{\partial a_m}(x,a) 
\\ 
\vdots &  & \vdots & \vdots &  & \vdots 
\\ 
\frac{\partial F_{\ell-\rho}}{\partial x_1}(x,a) & \cdots & \frac{\partial F_{\ell-\rho}}{\partial x_n}(x,a) &\frac{\partial F_{\ell-\rho}}{\partial a_1}(x,a) & \cdots & \frac{\partial F_{\ell-\rho}}{\partial a_m}(x,a) 
\end{array}
\right).
\end{eqnarray*}
Here, note that $\ell -\rho>0$. 
Then, we have 
\begin{eqnarray*}
(JF)_{(x_0,a_0)}=
\left(
\begin{array}{c}
M_1(x_0,a_0) 
\\ \\[-7mm]
\\ 
\hline \\ \\[-7mm]
\ast
\end{array}
\right). 
\end{eqnarray*}
From $\rank dF_{(x_0,a_0)}=\ell -\rho$, 
without loss of generality, from the first 
we may assume that $\rank M_1(x_0,a_0)=\ell-\rho$. 
Since all entries of $M_1(x,a)$ are continuous functions 
of $U'$ into $\R$, 
there exists an open neighborhood $\widetilde{U}$ of $(x_0,a_0)$ 
such that $\rank M_1(x,a)\geq \ell-\rho$ for any $(x,a)\in \widetilde{U}$ 
and $\widetilde{U}\subset U'$. 
Set 
\begin{eqnarray*}
\widetilde{Z}=\{(y_1\ld y_\ell)\in V \mid y_{1}=\cdots =y_{\ell-\rho}=0\}. 
\end{eqnarray*}
Since $\widetilde{Z}$ is a $C^r$ submanifold of dimension $\rho$, 
we get the assertion (1). 

From $\widetilde{U}\subset U'$ and 
$F(U')\subset V$, we get 
$F(\widetilde{U})\subset V$. 
Hence,  
we have $F(\widetilde{U})\cap Z\subset V\cap Z$. 
From  $V\cap Z \subset \widetilde{Z}$, we have established (2). 

We now prove (3). 
Let $(x,a)\in \widetilde{U}$ be any point satisfying 
$F|_{\widetilde{U}}(x,a)\in \widetilde{Z}$. 
Then, we have 
\begin{eqnarray*}
\delta(F|_{\widetilde{U}},(x,a),\widetilde{Z})&=&
\dim Y-\dim \left(
(dF|_{\widetilde{U}})_{(x,a)}(T_{(x,a)}\widetilde{U})+
T_{F|_{\widetilde{U}}(x,a)}\widetilde{Z}
\right)
\\
&=&\ell -\rank M_2(x,a), 
\end{eqnarray*}
where 
\begin{eqnarray*}
M_2(x,a)=\left\{ \begin{array}{ll}
\left(
\begin{array}{c|c}
M_1(x,a) &O
\\ \\[-3mm]
\hline 
 \\[-2.5mm]
\ast & E_\rho
\end{array}
\right)  & \text{if $\rho>0$},  
\\
\\
M_1(x,a) & \text{if $\rho=0$}.  
\end{array} \right.
\end{eqnarray*}

Here, $O$ is the 
$(\ell-\rho)\times \rho$ 
zero matrix. 
From $\rank M_1(x,a)\geq \ell-\rho$, we get $\rank M_2(x,a)=\ell$. 
Namely, $\delta(F|_{\widetilde{U}},(x,a),\widetilde{Z})=0$. 
Hence, we get the assertion~(3). 

Finally, we will prove (4). 
Let $(x,a)\in \widetilde{U}$ be any point. 
Suppose that $F(x,a)\not \in Z$. In the case, 
from $\delta(F_a,x,Z)=0$, the assertion~(4) clearly holds. 
Now, suppose that $F(x,a) \in Z$. 
From the assertion~(2), we have $F(x,a) \in \widetilde{Z}$. 
Hence, it follows that 
\begin{eqnarray*}
&{}&\delta(F_{a},x_,Z)- \delta(F_a, x, \widetilde{Z}) 
\\
&=&\left(\dim Y-\dim ((dF_{a})_{x}(T_{x}X')+T_{F_{a}(x)}Z)\right)
\\
&&-
\left(\dim Y-\dim ((dF_{a})_x(T_x X')+T_{F_a(x)}\widetilde{Z})\right)
\\
&=&-\dim (dF_{a})_{x}(T_{x}X')
+\dim ((dF_{a})_x(T_x X')+T_{F_a(x)}\widetilde{Z})
\\
&\leq&-\dim (dF_{a})_{x}(T_{x}X')
+\dim (dF_{a})_x(T_x X')+\dim T_{F_a(x)}\widetilde{Z}
\\
&=&\rho,
\end{eqnarray*}
where $X'=\pi_1(U\cap (X\times \{a\}))$. 
Therefore, we have proved (4). 
\par 
\medskip 
\underline{2. The case $q> 0$.}\qquad 
\vspace{1mm}

From $q>0$ and $\ell-q>0$, set 
$F|_{U'}=(F_{11}\ld F_{1q}, F_{21}\ld F_{2,\ell-q})$. 
Set $F_1=(F_{11}\ld F_{1q})$ and $F_2=(F_{21}\ld F_{2,\ell-q})$. 

From $F(x_0,a_0)\in Z$, we have 
\begin{eqnarray*}
\delta(F,(x_0,a_0),Z)&=&\dim Y-\dim (dF_{(x_0,a_0)}(T_{(x_0,a_0)}U)+
T_{F_{(x_0,a_0)}}Z)\\
&=&\ell -\rank M_3(x_0,a_0), 
\end{eqnarray*}
where 
\begin{eqnarray*}
M_3(x,a)&=&
\left(
\begin{array}{c|c}
(JF_1)_{(x,a)} &E_q 
\\ \\[-3mm]
\hline 
 \\[-2.5mm]
(JF_2)_{(x,a)} & O
\end{array}
\right)
\end{eqnarray*}
and $(x,a)\in U'$. 
Here, $O$ is the $(\ell-q)\times q$ 
zero matrix. 
From $\delta(F,(x_0,a_0),Z)=\rho$ and 
$\rank M_3(x_0,a_0)=\rank (JF_2)_{(x_0,a_0)}+q$, 
we have 
\begin{eqnarray*}
\rank (JF_2)_{(x_0,a_0)}=\ell-q-\rho. 
\end{eqnarray*}
For the proof of the case $q>0$, it is sufficient to consider: 
\begin{eqnarray*}
\left\{ \begin{array}{ll}
\text{2.1. The case $q>0$ and 
$\rank (JF_2)_{(x_0,a_0)}=0$ $(\ell-q-\rho=0)$}. \\
\text{2.2. The case $q>0$ and 
$\rank (JF_2)_{(x_0,a_0)}>0$ $(\ell-q-\rho>0)$}. \\
\end{array} \right.
\end{eqnarray*}

\par 
\underline{2.1. The case $q>0$ and $\rank (JF_2)_{(x_0,a_0)}=0$ $(\ell-q-\rho=0)$. }\qquad 
\vspace{1mm}

Set $\widetilde{Z}=V$ and $\widetilde{U}=U'$. 
Then, the set $\widetilde{Z}$ is a $C^r$ open submanifold of $Y$. 
From $\dim \widetilde{Z}=\ell$ and $\ell-q-\rho=0$, 
we have $\dim \widetilde{Z}=\dim Z +\rho$. 
Thus, we have proved (1). 

From $F(\widetilde{U})$ $(=F(U') )\subset V$ and $\widetilde{Z}=V$, 
we get $F(\widetilde{U}) \cap Z\subset \widetilde{Z}$. 
Hence, we have the assertion~(2).  

Since $\widetilde{Z}$ is an open submanifold, the assertion~(3) holds. 

Finally, we prove (4). 
Let $(x,a)\in \widetilde{U}$ be any point. 
Since $\widetilde{Z}$ is an open submanifold, 
we get $\delta(F_a,x,\widetilde{Z})=0$. 
In the case of $F_a(x)\not\in Z$, we have 
$\delta(F_a,x,Z)=0$. Hence, in the case, 
the assertion~(4) holds. 
In the case of $F_a(x)\in Z$, 
we get 
\begin{eqnarray*}
\delta(F_a,x,Z)&=&\ell-\dim ((dF_a)_x(T_xX')+T_{F_a(x)}Z)
\\ 
&\leq& \ell - \dim T_{F_a(x)}Z
\\
&=&\ell-q
\\
&=&\rho, 
\end{eqnarray*}
where $X'=\pi_1(U\cap (X\times \{a\}))$. 
Therefore, (4) holds. 
\par 
\medskip 
\underline{2.2. The case $q>0$ and $\rank (JF_2)_{(x_0,a_0)}>0$ $(\ell-q-\rho>0)$. }
\vspace{1mm}

From $\ell-q-\rho>0$, for any $(x,a)\in U'$, 
set 
{\footnotesize
\begin{eqnarray*}
&{}&M_4(x,a) \\
&=&
\left(
\begin{array}{cccccc}
\frac{\partial F_{21}}{\partial x_1}(x,a) & \cdots & \frac{\partial F_{21}}{\partial x_n}(x,a)  &\frac{\partial F_{21}}{\partial a_1}(x,a)  & \cdots & \frac{\partial F_{21}}{\partial a_m}(x,a)  
\\ 
\vdots &  & \vdots & \vdots &  & \vdots 
\\ 
\frac{\partial F_{2,\ell-q-\rho}}{\partial x_1}(x,a)  & \cdots & \frac{\partial F_{2,\ell-q-\rho}}{\partial x_n}(x,a)  &\frac{\partial F_{2,\ell-q-\rho}}{\partial a_1}(x,a)  & \cdots & \frac{\partial F_{2,\ell-q-\rho}}{\partial a_m}(x,a)  
\end{array}
\right).
\end{eqnarray*}
}Then, we get 
\begin{eqnarray*}
(JF_2)_{(x_0,a_0)}=\left\{ \begin{array}{ll}
\left(
\begin{array}{c}
M_4(x_0,a_0) 
\\ \\[-7mm]
\\ 
\hline \\ \\[-7mm]
\ast
\end{array}
\right) & \text{if $\rho>0$},  
\\
\\
M_4(x_0,a_0)  & \text{if $\rho=0$}.  \\
\end{array} \right.
\end{eqnarray*}
From $\rank (dF_2)_{(x_0,a_0)}=\ell -q-\rho$, 
without loss of generality, from the first 
we may assume that $\rank M_4(x_0,a_0)=\ell-q-\rho$. 
Since all entries of $M_4(x,a)$ are continuous functions 
of $U'$ into $\R$, 
there exists an open neighborhood $\widetilde{U}$ of $(x_0,a_0)$ 
such that $\rank M_4(x,a)\geq \ell-q-\rho$ for any $(x,a)\in \widetilde{U}$ 
and $\widetilde{U}\subset U'$. 
Set 
\begin{eqnarray*}
\widetilde{Z}=\{(y_1\ld y_\ell)\in V \mid y_{q+1}=\cdots =y_{\ell-\rho}=0\}. 
\end{eqnarray*}
The set $\widetilde{Z}$ is a $C^r$ submanifold of $Y$. 
From $\dim \widetilde{Z}=q+\rho$, (1) holds. 

From $F(U')\subset V$, we have $F(\widetilde{U})\subset V$. 
Hence, it follows that 
$F(\widetilde{U})\cap Z\subset V\cap Z \subset \widetilde{Z}$. 
Thus, (2) holds. 

Next, we will prove (3). 
Let $(x,a)\in \widetilde{U}$ be any point satisfying 
$F|_{\widetilde{U}}(x,a)\in \widetilde{Z}$. 
Then, we have 
\begin{eqnarray*}
\delta(F|_{\widetilde{U}},(x,a),\widetilde{Z})&=&
\dim Y-\dim \left(
(dF|_{\widetilde{U}})_{(x,a)}(T_{(x,a)}\widetilde{U})+
T_{F|_{\widetilde{U}}(x,a)}\widetilde{Z}
\right)
\\
&=&\ell -\rank M_5(x,a), 
\end{eqnarray*}
where 
\begin{eqnarray*}
M_5(x,a)=\left\{ \begin{array}{ll}
\left(
\begin{array}{c|c|c}
(JF_1)_{(x,a)} &E_q & O
\\[1mm]
 \cline{1-3} 
 & & 
 \\[-3mm]
M_4(x,a) & O & O
\\[1mm]
 \cline{1-3} 
 & & 
 \\[-3mm]
\ast & O & E_{\rho}
\end{array}
\right)  & \text{if $\rho>0$},  
\\ 
\\
\left(
\begin{array}{c|c}
(JF_1)_{(x,a)} &E_q
\\[1mm]
 \cline{1-2} 
 & 
 \\[-3mm]
M_4(x,a) & O 
\end{array}
\right)& \text{if $\rho=0$}. \\
\end{array} \right.
\end{eqnarray*}
It follows that 
\begin{eqnarray*}
\rank M_5(x,a)&=&\rank M_4(x,a)+q+\rho
\\
&=&\ell. 
\end{eqnarray*}
From $\delta(F|_{\widetilde{U}},(x,a),\widetilde{Z})=0$, (3) holds. 

Finally, we prove (4). 
Let $(x,a)\in \widetilde{U}$ be any point. 
Suppose that $F(x,a)\not \in Z$. In the case, 
from $\delta(F_a,x,Z)=0$, the assertion~(4) clearly holds. 
Now, suppose that $F(x,a) \in Z$. 
From the assertion~(2), we have $F(x,a) \in \widetilde{Z}$. 
Hence, it follows that 
\begin{eqnarray*}
&{}&\delta(F_{a},x_,Z)- \delta(F_a, x, \widetilde{Z}) 
\\
&=&\left(\dim Y-\dim ((dF_{a})_{x}(T_{x}X')+T_{F_{a}(x)}Z)\right)
\\
&&-
\left(\dim Y-\dim ((dF_{a})_x(T_x X')+T_{F_a(x)}\widetilde{Z})\right)
\\
&=&-\dim ((dF_{a})_{x}(T_{x}X')+T_{F_{a}(x)}Z)
+\dim ((dF_{a})_x(T_x X')+T_{F_a(x)}\widetilde{Z})
\\
&\leq&\rho,
\end{eqnarray*}
where $X'=\pi_1(U\cap (X\times \{a\}))$. 
Therefore, we have proved (4). 
\end{proof}

\section{Proof of \cref{main}}\label{sec:main}
In this section, for simplicity, set 
\begin{eqnarray*}
W=W(F,Z). 
\end{eqnarray*}
\subsection{Proof of $(\alpha)\Rightarrow (\beta)$}\label{subsec:1}
Set 
\begin{eqnarray*}
\widetilde{W}=\{(x,a)\in U \mid \delta(F_a, x, Z)>\delta(F, (x,a), Z)\}.
\end{eqnarray*}
Then, we have 
\[
\pi_2(U)-\pi_2(W)\cup \pi_2(\widetilde{W})
=\{a\in \pi_2(U)\mid  F_a \mbox{ is transverse to } Z \}.
\leqno{(\ast)}    
\]

Firstly, we will show that for any $a\in \pi_2(U)-\pi_2(W)\cup \pi_2(\widetilde{W})$, 
the mapping $F_a$ is transverse to $Z$. 
Suppose that $F_a$ is not transverse to $Z$. 
Then, there exists an element $x\in \pi_1(U\cap (X\times \{a\})) $ satisfying 
$\delta(F_a,x,Z)>0$.  
From \cref{inequality}, 
it is not hard to see that 
$(x,a)\in W\cup\widetilde{W}$. 
Then, we get $a=\pi_2(x,a)\in \pi_2(W\cup\widetilde{W})$. 
This contradicts $a\in \pi_2(U)-\pi_2(W)\cup \pi_2(\widetilde{W})$. 

Next, we will show that 
for any $a\in \pi_2(U)$ such that $F_a$ is transverse to $Z$, 
we have $a\in \pi_2(U)-\pi_2(W)\cup \pi_2(\widetilde{W})$. 
Suppose that $a\in \pi_2(W)\cup \pi_2(\widetilde{W})$. 
Then, there exists an element $x\in \pi_1(U)$ satisfying 
$(x,a)\in W\cup \widetilde{W}$. 
Hence, the mapping $F_a$ is not transverse to $Z$. 
This contradicts the hypothesis that $F_a$ is transverse to $Z$. 
Thus, we get $(\ast)$. 

Now, set 
\begin{eqnarray*}
\Sigma=\pi_2(W)\cup \pi_2(\widetilde{W}). 
\end{eqnarray*}
From $(\ast)$, it is sufficient to show that 
$\Sigma$ has Lebesgue measure zero in $\pi_2(U)$. 
From the hypothesis, $\pi_2(W)$ has 
Lebesgue measure zero in $\pi_2(U)$. 
Hence, 
it is sufficient to show that 
$\pi_2(\widetilde{W})$ has Lebesgue measure zero in $\pi_2(U)$. 
Namely, for the proof of $(\alpha)\Rightarrow (\beta)$, 
it is sufficient to show the following. 
\begin{proposition}\label{main_sub}
Let $X$, $A$ and $Y$ be $C^r$ manifolds, 
$Z$ be a $C^r$ submanifold of $Y$ and $F:U\to Y$ be a $C^r$ mapping, where $U$ is an open set of $X\times A$. 
If \begin{eqnarray*} 
r>\max \{\dim X+\dim Z-\dim Y+\delta(F,Z),0\}, 
\end{eqnarray*}
then $\pi_2(\widetilde{W})$ has Lebesgue measure zero in $\pi_2(U)$, 
where  
\begin{eqnarray*}
\widetilde{W}=\{(x,a)\in U \mid \delta(F_a, x, Z)>\delta(F, (x,a), Z)\}.
\end{eqnarray*}
\end{proposition}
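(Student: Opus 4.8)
The plan is to stratify $\widetilde{W}$ according to the value of $\delta(F,(x,a),Z)$ and to reduce, via \cref{vital1}, to the genuinely transverse situation already handled by \cref{lemma1r}. Since $\delta(F,(x,a),Z)\le\dim Y$ for every $(x,a)\in U$, the number $\delta(F,Z)$ is a finite nonnegative integer, and
\begin{eqnarray*}
\widetilde{W}=\bigcup_{\rho=0}^{\delta(F,Z)}\widetilde{W}_\rho ,
\end{eqnarray*}
where $\widetilde{W}_\rho$ is as defined in \cref{vital1}, because $\delta(F,(x,a),Z)\le\delta(F,Z)$ for all $(x,a)\in U$. As this is a finite union, it suffices to prove that $\pi_2(\widetilde{W}_\rho)$ has Lebesgue measure zero in $\pi_2(U)$ for each $\rho$ with $0\le\rho\le\delta(F,Z)$.

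Fix such a $\rho$. For every $(x_0,a_0)\in\widetilde{W}_\rho$, \cref{vital1} furnishes an open neighborhood $\widetilde{U}\subset U$ of $(x_0,a_0)$ and a $C^r$ submanifold $\widetilde{Z}$ of $Y$ with properties (1)--(4) stated there. Since $X\times A$ has a countable basis, so does its open subset $U$, so $U$ is hereditarily Lindel\"of; hence I can extract a countable subfamily $\{\widetilde{U}_i\}_{i\in\mathbb{N}}$, with associated submanifolds $\{\widetilde{Z}_i\}_{i\in\mathbb{N}}$, whose union contains $\widetilde{W}_\rho$. It then suffices to show that $\pi_2(\widetilde{W}_\rho\cap\widetilde{U}_i)$ has Lebesgue measure zero in $\pi_2(U)$ for each $i$. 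Here the key observation is that property~(1) gives $\dim\widetilde{Z}_i=\dim Z+\rho\le\dim Z+\delta(F,Z)$, so the hypothesis on $r$ yields $r>\max\{\dim X+\dim\widetilde{Z}_i-\dim Y,0\}$; together with property~(3), namely that $F|_{\widetilde{U}_i}$ is transverse to $\widetilde{Z}_i$, \cref{lemma1r} applies to $F|_{\widetilde{U}_i}:\widetilde{U}_i\to Y$ and $\widetilde{Z}_i$ and shows that $F|_{\widetilde{U}_i}$ is generically transverse to $\widetilde{Z}_i$. Thus there is a set $\Sigma_i$ of Lebesgue measure zero in $\pi_2(\widetilde{U}_i)$ such that $(F|_{\widetilde{U}_i})_a$ is transverse to $\widetilde{Z}_i$ for every $a\in\pi_2(\widetilde{U}_i)-\Sigma_i$; since $\pi_2(\widetilde{U}_i)$ is an open subset of $\pi_2(U)$, the set $\Sigma_i$ has Lebesgue measure zero in $\pi_2(U)$ as well.

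The heart of the proof is then the inclusion $\pi_2(\widetilde{W}_\rho\cap\widetilde{U}_i)\subset\Sigma_i$. To establish it, I would take $(x,a)\in\widetilde{W}_\rho\cap\widetilde{U}_i$ with $a\notin\Sigma_i$; then $(F|_{\widetilde{U}_i})_a$ is transverse to $\widetilde{Z}_i$, in particular at $x$, and since transversality at a point depends only on the germ of the map there (and on $\widetilde{Z}_i$ near the image point), this forces $\delta(F_a,x,\widetilde{Z}_i)=0$. By property~(4) of \cref{vital1}, $\delta(F_a,x,Z)\le\delta(F_a,x,\widetilde{Z}_i)+\rho=\rho$, which contradicts $\delta(F_a,x,Z)>\delta(F,(x,a),Z)=\rho$. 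This proves the inclusion, so $\pi_2(\widetilde{W}_\rho\cap\widetilde{U}_i)$ has Lebesgue measure zero in $\pi_2(U)$; consequently $\pi_2(\widetilde{W})\subset\bigcup_{\rho=0}^{\delta(F,Z)}\bigcup_{i\in\mathbb{N}}\Sigma_i$ is a countable union of sets of Lebesgue measure zero in $\pi_2(U)$, hence has Lebesgue measure zero in $\pi_2(U)$.

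Since the substantive geometric work is carried out in \cref{vital1}, I expect the only delicate points in this assembly to be the following two. First, one must check that the bound on $r$ still controls $F|_{\widetilde{U}_i}$ with respect to the \emph{enlarged} submanifold $\widetilde{Z}_i$ rather than $Z$; this is exactly where the summand $\delta(F,Z)$ in the hypothesis on $r$ is consumed, through the estimate $\dim\widetilde{Z}_i-\dim Z=\rho\le\delta(F,Z)$. Second, one must handle the bookkeeping around localization and countability: the identification $\delta((F|_{\widetilde{U}_i})_a,x,\widetilde{Z}_i)=\delta(F_a,x,\widetilde{Z}_i)$, and the use of second countability of $U$ to replace the (a priori uncountable) cover provided by \cref{vital1} by a countable one, so that the concluding union of null sets is countable.
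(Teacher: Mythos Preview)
Your proposal is correct and follows essentially the same route as the paper: stratify $\widetilde{W}$ by the value $\rho=\delta(F,(x,a),Z)$, cover each $\widetilde{W}_\rho$ by countably many charts from \cref{vital1}, invoke \cref{lemma1r} on each $F|_{\widetilde{U}_i}$ with respect to $\widetilde{Z}_i$, and deduce the inclusion $\pi_2(\widetilde{W}_\rho\cap\widetilde{U}_i)\subset\Sigma_i$ from property~(4). Your explicit justification of the countable subcover via second countability and your remark on where the summand $\delta(F,Z)$ in the hypothesis on $r$ is consumed are in fact slightly more detailed than the paper's own argument.
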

\noindent 
\\ 
\underline{Proof of \cref{main_sub}.}\quad 
\vspace{1mm}

Set 
\begin{eqnarray*}
\widetilde{W}_\rho=\{(x,a)\in U \mid \delta(F_a, x, Z)>\delta(F, (x,a), Z)=\rho \}. 
\end{eqnarray*}
We get 
\begin{eqnarray*}
\widetilde{W}=\bigcup_{0\leq \rho\leq \delta(F,Z)} \widetilde{W}_\rho. 
\end{eqnarray*}
In order to show that $\pi_2(\widetilde{W})$ 
has Lebesgue measure zero in $\pi_2(U)$, 
it is sufficient to show that $\pi_2(\widetilde{W}_\rho)$ has Lebesgue measure zero in 
$\pi_2(U)$ for any $\rho$ $(0\leq \rho\leq \delta(F,Z))$. 

From \cref{vital1}, 
there exist countably many open neighborhoods 
$\widetilde{U}_1,  \widetilde{U}_2, \ldots $ such that 
$\widetilde{W}_\rho \subset \cup_{i=1}^\infty  \widetilde{U}_i$ and 
countably many $C^r$ submanifolds 
$\widetilde{Z}_1, \widetilde{Z}_2,\ldots$ 
satisfying for any positive integer $i$,  
\begin{enumerate}
\item [(1)]
$\dim \widetilde{Z}_i=\dim Z +\rho$. 
\item [(2)]
$F(\widetilde{U}_i)\cap Z \subset \widetilde{Z}_i$. 
\item  [(3)]
The mapping 
$F|_{\widetilde{U}_i} :\widetilde{U}_i \to Y$ is transverse to $\widetilde{Z}_i$. 
\item [(4)]
For any $(x,a)\in \widetilde{U}_i$, it follows that  
$\delta(F_a, x, Z)- \delta(F_a,x,\widetilde{Z}_i)\leq \rho$. 
\end{enumerate}
From $\widetilde{W}_\rho \subset \cup_{i=1}^\infty  \widetilde{U}_i$, 
in order to show that $\pi_2(\widetilde{W}_\rho)$ is 
Lebesgue measure zero in $\pi_2(U)$, it is sufficient to show that 
for any $i$, the set 
$\pi_2(\widetilde{W}_\rho \cap \widetilde{U}_i)$ has Lebesgue measure zero in 
$\pi_2(\widetilde{U}_i)$. 

From $\rho \leq \delta(F,Z)$ and the assertion (1), 
we get 
\begin{eqnarray*}
r&>&\max \{\dim X+\dim Z+\delta(F,Z)-\dim Y,0\}
\\
&\geq& \max \{\dim X+\dim Z+\rho-\dim Y,0\}
\\
&=& \max \{\dim X+\dim \widetilde{Z}_i-\dim Y,0\}. 
\end{eqnarray*}
From the assertion~(3), 
we can apply \cref{lemma1r} to $F|_{\widetilde{U}_i}$. 
Hence, there exists 
 a Lebesgue measure zero set $\Sigma_i $ in 
$\pi_2(\widetilde{U}_i)$ 
such that 
for any $a\in \pi_2(\widetilde{U}_i)-\Sigma_i$, the mapping 
$(F|_{\widetilde{U}_i})_a$ is transverse to $\widetilde{Z}_i$. 
In order to finish the proof, it is sufficient to show that 
$\pi_2(\widetilde{W}_\rho \cap \widetilde{U}_i)\subset \Sigma_i$. 

Let $a\in \pi_2(\widetilde{W}_\rho \cap \widetilde{U}_i)$ be 
any element. Then, there exists an element  
$x\in \pi_1(\widetilde{U}_i)$ such that 
$(x,a)\in \widetilde{W}_\rho \cap \widetilde{U}_i$.   
By $(x,a)\in \widetilde{W}_\rho$, we get $\delta(F_a, x, Z)>\rho$. 
From the assertion~(4), 
\begin{eqnarray*}
\delta(F_a, x, Z)>\rho\geq \delta(F_a, x, Z)- \delta(F_a,x,\widetilde{Z}_i). 
\end{eqnarray*}
Hence, it follows that  
\begin{eqnarray*}
\delta(F_a,x,\widetilde{Z}_i)>0. 
\end{eqnarray*}
Namely, $(F|_{\widetilde{U}_i})_a$ is not transverse to $\widetilde{Z}_i$. 
Hence, we have $a \in \Sigma_i$. \QED
\subsection{Proof of $(\beta)\Rightarrow(\alpha)$}\label{subsec:2}
From $(\beta)$, there exists a Lebesgue measure zero set $\Sigma$ of $\pi_2(U)$ 
such that for any $a\in\pi_2(U)-\Sigma$, 
the mapping $F_a$ is transverse to $Z$. 

Suppose that $\pi_2(W)$ does not have Lebesgue measure zero in $\pi_2(U)$.
Then, it is clearly seen that $\pi_2(W) \not\subset \Sigma$. 
Thus, there exists an element $a \in \pi_2(W)$ satisfying 
$a \in \pi_2(U)-\Sigma$. 
From $a \in \pi_2(W)$, there exists an element $x\in \pi_1(U)$ such that $(x,a)\in W$. 
Hence, we get $\delta(F_a,x,Z)>0$. 
Namely, $F_a$ is not transverse to $Z$. 
This contradicts $(\beta)$. 
\QED
\section*{Acknowledgements}
The author is most grateful to the anonymous reviewer for carefully reading 
the first manuscript of this paper and for giving invaluable suggestions. 
The author is grateful to Kenta Hayano, Takashi Nishimura and Osamu Saeki 
for their kind comments. 

\end{document}